\documentclass[10pt]{article}
\usepackage{amssymb}
\usepackage{amsmath}
\usepackage{amsthm}
\usepackage{verbatim}
\setlength\parskip{0,25in}
\setlength\parindent{0in}

\begin{document}
\title
{Global wellposedness for a certain class of large initial data for the 3D
Navier-Stokes Equations}
\author{Percy Wong}

\theoremstyle{plain} 
\newtheorem{thm}{Theorem}
\newtheorem{prop}{Proposition}
\newtheorem{lem}{Lemma}
\newtheorem{cor}{Corollary}

\theoremstyle{definition}
\newtheorem{defn}{Definition}
\newtheorem{rmk}{Remark}
\maketitle

\begin{abstract}
In this article, we consider a special class of initial data to the 3D Navier-Stokes equations on the torus, in which there is a certain degree of orthogonality in the components of the initial data.  We showed that, under such conditions, the Navier-Stokes equations are globally wellposed.  We also showed that there exists large initial data, in the sense of the critical norm $B^{-1}_{\infty,\infty}$ that satisfies the conditions that we considered.
\end{abstract}

\section{Introduction and Summary}
In this paper, we study the global wellposedness problem on the 3-torus with
side length $1$, $\mathbb{T}^3$, for the Navier-Stokes system:

\begin{equation}\label{NS}
 \Bigg\{\begin{array}{rl}
    u_t + u\cdot\nabla u - \Delta u + \nabla p = 0 & \text{in }
\mathbb{T}^3\times\mathbb{R}_+\\
    \textrm{div }u = 0 & \text{in } \mathbb{T}^3\times\mathbb{R}_+\\
    u(x,0) = u_0(x) & \text{on } \mathbb{T}^3\times\{t=0\}
   \end{array}
\end{equation}

with the restriction on $u_0 = (u_0^1, u_0^2, u_0^3)$ that the frequency
support of the three components are disjoint, and that they are sufficiently
separated in the frequency space.

The goal of this paper is to answer in the affirmative the global wellposedness
problem above:

\begin{thm} \label{thm:main}
 Suppose $u_0^i$ are a mean zero functions with disjoint frequency support,
i.e. $\emph{supp}(\widehat{u_i}) \cap \emph{supp}(\widehat{u_j}) =
\emptyset$ for $i \neq j$. Suppose furthermore that, two of the three
components, say $u_0^1$ and $u_0^2$ are frequency localized at scale $N$ and
$N^{1+\epsilon}$ respectively for some $\epsilon > 0$, i.e.
$|\emph{supp}(\widehat{u_i})| \subset [N,O(1)N]$ and
$|\emph{supp}(\widehat{u_i})| \subset
[N^{1+\epsilon},O(1)N^{1+\epsilon}]$.  Then there exists
$\delta(\epsilon) > 0$ and some universal constant $C$ such that
if $||u_0^i||_{L^2} \leq (C^{-1} \delta \log N^{1+\epsilon})^{1/2}$ for $1\leq
i\leq 3$, and $||u_0^3||_{L^3}\leq C^{-1} N^{\frac{1}{3}-\delta(1+\epsilon)}
(\log N^{1+\epsilon})^{-1/2}$, the problem (\ref{NS}) admits a unique global
solution in $C_b(\mathbb{R}_+, L^3)$.
\end{thm}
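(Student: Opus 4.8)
The plan is to solve the mild (Duhamel) formulation
\[
u(t)=e^{t\Delta}u_0-\int_0^t e^{(t-s)\Delta}\,\mathbb{P}\,\nabla\!\cdot\!\big(u\otimes u\big)(s)\,ds=:u_L+B(u,u)
\]
by a contraction argument carried out uniformly in the time horizon $T$, but in a function space adapted to the frequency geometry of the data rather than in a plain critical Kato space; together with the standard local $L^3$-theory on $\mathbb{T}^3$ --- which applies because $u_0\in L^2\cap L^3$ (recall $u_0^1,u_0^2$ are frequency-localized $L^2$ functions, hence trigonometric polynomials, hence bounded) and supplies a unique maximal mild solution together with uniqueness in $C([0,T),L^3)$ --- a global-in-time a priori bound on a critical norm is all that is needed. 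The first step is to read off the structural gain hidden in the hypotheses: since the $\widehat{u_0^i}$ have pairwise disjoint supports and $\operatorname{div}u_0=0$, for each frequency $\xi$ at most one of the $\widehat{u_0^j}(\xi)$ is nonzero, so $\xi_i\widehat{u_0^i}(\xi)=0$, i.e.\ each $u_0^i$ is independent of $x_i$. Hence $(u_0^i)^2$ is independent of $x_i$, so $\partial_i\big((u_0^i)^2\big)=0$, and the diagonal products drop out of both $\nabla\!\cdot\!(u\otimes u)$ and of $\Delta p=-\partial_i\partial_j(u^iu^j)$: at $t=0$ the whole nonlinearity is built only from off-diagonal products $u^iu^j$ with $i\neq j$.

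The engine of the argument is a bilinear estimate for $B$ in which one splits the output into a high-frequency part ($\gtrsim N$) and a low-frequency part ($\ll N$). For the high-output part, the parabolic operator $\int_0^t e^{(t-s)\Delta}\mathbb{P}\nabla\!\cdot\!(\cdot)\,ds$ supplies a smoothing factor $\sim N^{-1}$ (and $\sim N^{-(1+\epsilon)}$ for anything built from $u_0^2$) together with fast-in-time input decay $e^{-cN^2 s}$, while the cost of converting the large $L^2$-mass $\|u_0^i\|_{L^2}^2\lesssim\delta\log N^{1+\epsilon}$ into the Kato/$L^6$ norms by Bernstein's inequality is only $N^{O(\delta)}$. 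For the low-output part three facts conspire. First, the diagonal terms $(u^i)^2$ are absent by the previous paragraph. Second, $u^1u^2$ produces \emph{no} frequency $\ll N$ at all, since $\operatorname{supp}\widehat{u_0^1}+\operatorname{supp}\widehat{u_0^2}$ misses the ball of radius $\sim N^{1+\epsilon}-O(N)\gg N$ --- this is precisely where $\epsilon>0$ is used. Third, for the cross term $u^iu^3$ ($i=1,2$), an output frequency $\zeta$ with $|\zeta|=k$ forces the $x_i$-component of $u^3$'s frequency and the $x_3$-component of $u^i$'s frequency to be $\lesssim k$ (since $u^i$ is spectrally supported in $\{\xi_i=0\}$ and $u^3$ in $\{\xi_3=0\}$), so the surviving derivative and Leray multipliers in the low-output piece carry a gain $\lesssim k$; combined with $\int_0^t e^{-ck^2(t-s)}ds\lesssim k^{-2}$ this leaves $\lesssim k^{-1}$ per dyadic shell, and $\sum_{1\le k\lesssim N}k^{-1}$ contributes one factor $\log N$. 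Tracing the constants, the $u^iu^3$ contribution is controlled by $\lesssim(\log N)\,\|u^i\|\,\|u_0^3\|_{L^3}$ up to the $N^{O(\delta)}$ loss, and this is exactly the place where the hypothesis $\|u_0^3\|_{L^3}\lesssim N^{1/3-\delta(1+\epsilon)}(\log N)^{-1/2}$ enters and is seen to be the natural threshold.

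With these estimates fixed, one chooses $\delta=\delta(\epsilon)>0$ small enough that the $N^{O(\delta)}$ losses are beaten by the $N^{-c\epsilon}$ gain coming from the scale separation between $N$ and $N^{1+\epsilon}$ (this dependence of $\delta$ on $\epsilon$ is unavoidable, and the $(\log N)^{\pm1/2}$ in the statement reflects that each component carries $L^2$-mass $\sim\delta\log N^{1+\epsilon}$ and that one such square root must be paid to close the $u^3$ bilinear bound); the hypotheses on $\|u_0^i\|_{L^2}$ and $\|u_0^3\|_{L^3}$ then make $B(u_L,u_L)$ small and $B$ a small-norm bilinear map on the relevant ball, uniformly in $T$, so the contraction principle yields a solution on $[0,T]$ with bound independent of $T$. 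Letting $T\to\infty$ gives a global solution, which lies in $C_b(\mathbb{R}_+,L^3)$ and is unique by the $L^3$-theory.

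The main obstacle is that "diagonal products absent" and "$u^1u^2$ produces no low output" are exact only at $t=0$: as soon as the iteration runs, $u^i(t)$ is no longer exactly independent of $x_i$ nor exactly supported in its original annulus, so the delicate low-output estimate has to be closed by a bootstrap that also tracks the distance between the nonlinearly generated corrections and the ideal frequency-localized configuration. Showing that these corrections remain in a thin enough frequency neighborhood of the starting supports --- with geometrically small mass in the tails of the iterated sumsets --- for the derivative gains above to persist is the technical heart of the proof, and it is what ultimately pins down the precise exponents of $N$ and $\log N$ in the statement.
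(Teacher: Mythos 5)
There is a genuine gap, and you have in fact located it yourself in your final paragraph: every quantitative gain in your scheme (the vanishing of the diagonal products, the absence of low output frequencies in $u^1u^2$, the directional constraints in $u^iu^3$) is derived from the exact frequency-support geometry of the \emph{initial data}, but your iteration is run on the full solution $u(t)$, whose Fourier support and directional dependence are destroyed by the very first application of the bilinear map. You propose to repair this by a bootstrap "tracking the distance between the nonlinearly generated corrections and the ideal frequency-localized configuration," but you do not supply it, and it is not expected to close: the nonlinearity immediately populates all frequencies and all coordinate dependences, and there is no small parameter forcing the iterated sumsets to carry "geometrically small mass" near the original annuli. Since this is, by your own account, "the technical heart of the proof," the argument as written does not establish the theorem. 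A secondary issue is that a plain uniform-in-$T$ contraction in a critical space must also absorb the large drift term coming from the linear evolution of the data (whose $L^2_tL^\infty_x$ norm is of size $\delta\log N$, not small); handling that term requires an energy/Gronwall mechanism producing the factor $e^{-C\|v^\star\|^2_{L^2_tL^\infty_x}}$, which a fixed-point argument alone does not give.

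The paper's route avoids the propagation problem entirely, and this is the missing idea. Because each $u_0^i$ is independent of $x_i$, the vector field $(0,\dots,u_0^i,\dots,0)$ launches an \emph{exact global solution} of Navier--Stokes equal to its own heat flow: the self-interaction $v^i\cdot\nabla v^i$ vanishes identically for \emph{all} time, not just at $t=0$, since the heat flow preserves independence of $x_i$. One therefore writes $u=\sum_i v^i+R$, where the structural information is carried forever by the explicit profiles $v^i=e^{t\Delta}u_0^i$, and $R$ solves a forced, drifted Navier--Stokes system with \emph{zero} initial data and forcing $F=-\sum_{i<j}Q(v^i,v^j)$ built only from heat flows. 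The whole proof then reduces to (i) the bilinear heat-flow estimate $\|F\|_{L^1_tL^3_x}\lesssim N^{-2/3-2\epsilon/3}\|u^i\|_{L^3}\|u^j\|_{L^3}$, proved by H\"older, Bernstein and $L^p$--$L^q$ heat decay with an optimized time splitting at $K=N^{-2(1+\epsilon)}$ (no propagation of structure needed, since the $v^i$ keep their frequency localization exactly), and (ii) the Chemin--Gallagher theorem giving global wellposedness of the residual system under the condition $\|F\|_{L^1_tB^{-1+3/p}_{p,2}}\le C_0^{-1}e^{-C_0\|v^\star\|^2_{L^2_tL^\infty_x}}$; the exponential is bounded below by $N^{-\delta(1+\epsilon)}$ thanks to $\|u_0^i\|_{L^2}^2\lesssim\delta\log N^{1+\epsilon}$, and choosing $\delta\lesssim\epsilon$ lets the $N^{-2\epsilon/3}$ gain from the scale separation beat this loss. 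Your frequency-geometric observations would be a reasonable engine for estimating $F$ itself, but they must be applied to the heat flows $v^i$, not to the unknown solution.
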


\begin{rmk}
 As we shall see in the proof, the sole purpose of the parameter $\epsilon$
in the statement of the theorem is to allow us largeness in all three
components of the initial data.  If one is only interested in a global
existence result that allows two of the three components to be large and the
remaining one small in $B^{-1}_{\infty, \infty}$ (but still large in the lower
critical Besov norms), one can remove the $\epsilon$ in the statement and
proceed as in the proof below easily.  However, as of now, there is no
satisfactory small data wellposedness results for the 3D Navier-Stokes in
$B^{-1}_{\infty, \infty}$ and given the work of Bourgain and Pavlovi\'c
\cite{BP}, it is unlikely that one can be obtained through a classical
perturbative argument.  As a result, even if one of the components remain small
in $B^{-1}_{\infty, \infty}$, the data will not fall within the
establised framework of a small perturbation of a 2D data.
\end{rmk}

\begin{rmk}
 The reason for the problem to be posed on the torus instead of $\mathbb{R}^3$
is to avoid having initial data with infinite energy.  If one is allowed that,
the result can be extended to $\mathbb{R}^3$.
\end{rmk}

We shall also show that the conditions set out in the theorem above admits
arbitrarily large data in $B^{-1}_{\infty, \infty}$ that is not a perturbation
of a 2D data:

\begin{thm}
 For any $M > 0$, there exists initial data $u_0^1, u_0^2, u_0^3$ satisfying
that conditions in Theorem \ref{thm:main} such that each component $u_0^i$ is
large in the following sense:
\begin{displaymath}
 ||u_0^i||_{B^{-1}_{\infty,\infty}} > M
\end{displaymath}
\end{thm}

The study of wellposedness of the Navier-Stokes equations is one with a long
history, originated by the seminal paper of Leray \cite{L}, proving the
existence of weak solutions.  The study of the equations for small data in
critical spaces started with a groundbreaking paper by Fujita and Kato
\cite{FK}, in which they showed local wellposedness and small data global
wellposedness in the space $H^{\frac{1}{2}}$.  Later, Kato \cite{K} also showed
the same results in the larger critical space $L^3$.  In the early nineties,
Cannone, Meyer and Planchon \cite{CMP} showed small data global wellposedness in
the spaces $B^{-1+\frac{3}{p}}_{p,\infty}$ and more recently Koch and Tataru
\cite{KT} showed the samel result in the spaces of derivatives of $BMO$
functions $\partial BMO$.  The largest critical space known is the space
$B^{-1}_{\infty, \infty}$, for which Bourgain and Pavlovi\'c \cite{BP} have
provided an example with arbitrarily small norm that blows up in arbitrarily
short amount of time. 

To summarize we have the following embeddings of critical spaces:
\begin{displaymath}
 H^{1/2} \hookrightarrow L^3 \hookrightarrow B^{-1+\frac{3}{p}}_{p,\infty}
\hookrightarrow \partial BMO \hookrightarrow B^{-1}_{\infty, \infty}
\end{displaymath}
where small data global wellposedness have been proven for all except the last
space, in which illposedness (in the sense that arbitrarily small data can grow
to arbitrarily large magnitude in arbitrarily small time) exists.

The study for large initial data problems, where the large initial data does not
reduce to a perturbation of the 2D problems or a formulation where exact
solutions or global wellposedness is already known, started with a recent paper
by Chemin and Gallagher \cite{CG1}, and continued by Bahouri, Chemin, Gallagher, Mullaert, Paicu and Zhang (e.g. \cite{CG2}, \cite{CG3}, \cite{CGP}, \cite{CGZ}, \cite{BG}, \cite{CGM}).  

\section{Preliminary considerations}
Before we begin proving the theorems, let us first make some simple
observations that are going to greatly reduce the complexity of the problem at
hand.  First of all, the incompressibility condition in frequency space reads:
\begin{displaymath}
 \sum_{i=1}^3 \xi_i\hat{u^i}(\xi) = 0 
\end{displaymath}
Under the conditions that initially the frequency supports of the three
components of the data are disjoint, we can infer that for $1\leq i \leq3$,
\begin{displaymath}
 \xi_i\hat{u^i}(\xi) = 0
\end{displaymath}
Interpreting the above in physical space, we have, initially the data $u_0^1$
is a function of $x_2$ and $x_3$ only, $u_0^2$ a function of $x_1$ and $x_3$
only and $u_0^3$ a function of $x_1$ and $x_2$ only.  We shall therefore make
use of this observation and forget about the disjointness of the frequency
support from now on.

Let us from now on denote the Leray projector onto the divergence free vector
field by \textbf{P}.  The problem (\ref{NS}) can be simplified by breaking it
apart into four problems.  The first three are two-dimensional Navier-Stokes
equations with each component as its initial condition.  For example the first
component $u_0^1$ would yield:

\begin{equation}\label{2DNS}
 \Bigg\{\begin{array}{rl}
    v^1_t + \textbf{P}(v^1\cdot\nabla v^1) - \Delta v^1 = 0 & \text{in }
\mathbb{T}^3\times\mathbb{R}_+\\
    \textrm{div }v^1 = 0 & \text{in } \mathbb{T}^3\times\mathbb{R}_+\\
    v^1(x,0) = (u_0^1(x), 0, 0)^t & \text{on } \mathbb{T}^3\times\{t=0\}
   \end{array}
\end{equation}

Note that this is possible because $(u_0^1,0,0)$ satisfies the
incompressibility condition and similarly for the other two components.  Upon
further inspection, it is clear that the nonlinear term $\textbf{P}(u\cdot\nabla
u)$ vanishes for all time as $\frac{\partial u_0^1}{\partial x_1} = 0$ for all
$x$. Thus, the problem (\ref{2DNS}) is just a heat equation and solution is
$v^1(x,t) = (e^{t\Delta} u_0^1, 0, 0)^t$.  To make the observation above
rigorous, one only needs to invoke the uniqueness of solutions to the 2D
Navier-Stokes equations and observe that the solution to the 2D heat
equation also solves the 2D Navier-Stokes for our prescribed initial conditions.

The fourth equation, which will be the main focus of the rest of this paper,
accounts for the interaction of these solutions to heat equations:

\begin{equation}\label{R}
 \Bigg\{\begin{array}{rl}
    R_t + \textbf{P}(R\cdot\nabla R) - \Delta R + Q(v^\star, R) = F & \text{in }
\mathbb{T}^3\times\mathbb{R}_+\\
    \textrm{div }R = 0 & \text{in } \mathbb{T}^3\times\mathbb{R}_+\\
    R(x,0) = 0 & \text{on } \mathbb{T}^3\times\{t=0\}
   \end{array}
\end{equation}

where, borrowing the notation from \cite{CG1}, $Q(a,b) =
\textbf{P}\textrm{div}(a\otimes b + b \otimes a)$, $F = -\sum_{i < j} Q(v^i,
v^j)$, $v^i$ the solution to the $i$-th heat equation as above, and $v^\star =
\sum_i v^i$.

The equation above is the usual incompressible Navier-Stokes with a linear
term and an inhomogenous forcing term.  For small data, Chemin and Gallagher
\cite{CG1} has shown that the system is globally wellposed given good decay
properties of $v^\star$.  The reason is firstly the system retains all the
scaling properties of Navier-Stokes; and secondly under small data, the system
is govern by the linear term, and the decay properties of $v^\star$ will
guarantee that the evolution of the equation remains small in the suitable
norms.  This formulation thus suggests that, if we are able to control the
behaviour of $F$, that is if we can control the interaction of the solutions of
heat solutions which propagates in orthogonal directions, then a perturbative
argument will yield us global wellposedness of the original Navier-Stokes
problem.

Lastly, as we see that a good understanding of the decay property of the heat
kernel is required to solve the problem at hand, we state without proof the
following lemma for the readers' convenience.  The lemma can be proven easily by
analysing the fundamental solution of the heat equation.

\begin{lem}
 We have the following $L^p$ to $L^q$ estimates for the 3-dimensional heat
equation:
\begin{displaymath}
 ||\partial^s e^{t\Delta} u||_{L^q} \leq t^{-\frac{3+s}{2} + \frac{3}{2}
(\frac{1}{p} - \frac{1}{q} + 1)} ||u||_{L^p}
\end{displaymath}

\end{lem}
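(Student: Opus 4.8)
The plan is to realize $e^{t\Delta}$ as convolution against the (periodized) heat kernel and then reduce the claim to Young's convolution inequality together with the exact self-similarity of the Gaussian. On $\mathbb{T}^3$ write $e^{t\Delta}u = \Theta_t * u$, where $\Theta_t(x) = \sum_{k\in\mathbb{Z}^3} G_t(x+k)$ is the periodization of the Euclidean heat kernel $G_t(x) = (4\pi t)^{-3/2} e^{-|x|^2/(4t)}$; equivalently $\widehat{\Theta_t}(k) = e^{-4\pi^2|k|^2 t}$. Since differentiation commutes with convolution, $\partial^s e^{t\Delta}u = (\partial^s \Theta_t) * u$, and Young's inequality gives $\|\partial^s e^{t\Delta}u\|_{L^q} \le \|\partial^s \Theta_t\|_{L^r}\,\|u\|_{L^p}$ under the Hölder relation $1 + \tfrac1q = \tfrac1r + \tfrac1p$, i.e. $\tfrac1r = 1 - \tfrac1p + \tfrac1q$. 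It therefore suffices to establish $\|\partial^s \Theta_t\|_{L^r(\mathbb{T}^3)} \lesssim t^{-\frac{3+s}{2} + \frac{3}{2r}}$; substituting the value of $\tfrac1r$ produces precisely the power of $t$ appearing in the statement.

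For $0 < t \le 1$ I would first treat the Euclidean kernel: the scaling $x\mapsto \sqrt t\,x$ yields $\partial^s G_t(x) = t^{-(3+s)/2}\,\Psi_s(x/\sqrt t)$ for a fixed Schwartz function $\Psi_s$ (a Hermite-type polynomial times a Gaussian), whence $\|\partial^s G_t\|_{L^r(\mathbb{R}^3)} = t^{-(3+s)/2}\,t^{3/(2r)}\,\|\Psi_s\|_{L^r(\mathbb{R}^3)}$, which is the desired bound with constant $\|\Psi_s\|_{L^r}$. To pass to the torus, bound $\|\partial^s\Theta_t\|_{L^r(\mathbb{T}^3)} \le \|\partial^s G_t\|_{L^r(\mathbb{R}^3)} + \big\|\sum_{k\neq 0}\partial^s G_t(\cdot + k)\big\|_{L^r(\mathbb{T}^3)}$; for $t\le 1$ the tail sum is dominated by a convergent series with Gaussian decay in $|k|$ and is lower order, so the Euclidean estimate persists on $\mathbb{T}^3$.

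For $t \ge 1$, the data relevant to the paper is mean zero, so $e^{t\Delta}u$ remains mean zero and one may replace $\Theta_t$ by $\Theta_t - 1 = \sum_{k\neq 0} e^{-4\pi^2|k|^2 t}e^{2\pi i k\cdot x}$, whose $C^m(\mathbb{T}^3)$ norms decay like $e^{-4\pi^2 t}$ by the spectral gap; this beats any negative power of $t$, so the estimate holds with room to spare on this range. Combining the two ranges gives the lemma. I do not anticipate a real obstacle: on $\mathbb{R}^3$ the estimate is an immediate consequence of Young's inequality and the self-similarity of the Gaussian, and the only care needed is the periodization tail for small $t$ and the exploitation of mean-zero data (via the spectral gap) for large $t$, both of which are routine.
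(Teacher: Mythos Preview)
Your proposal is correct and follows exactly the route the paper indicates: the lemma is stated there without proof, with the remark that it ``can be proven easily by analysing the fundamental solution of the heat equation,'' which is precisely your Young's-inequality-plus-Gaussian-scaling argument (the periodization tail and spectral-gap touches being the natural adaptations to $\mathbb{T}^3$). One minor note: the exponent you actually derive is $-\tfrac{3+s}{2}+\tfrac{3}{2}\bigl(1-\tfrac1p+\tfrac1q\bigr)$, which differs from the paper's displayed exponent by the sign of $\tfrac1p-\tfrac1q$; your version is the standard correct one, and in the paper the lemma is only invoked with $p=q$, where the two expressions coincide.
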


\section{Notations}
In this paper, we will be working in $L^p$ spaces, the Sobolev spaces $W^{1,p}$,
as well as various Besov spaces $B^s_{p,q}$.  Let us take a moment to establish
some simple definitions and facts about these spaces.

\begin{defn}
 Let $P_j$ denote the frequency cut-off operator of the $j$-th dyadic
frequencies, 
\begin{displaymath}
 \widehat{P_jf}(\xi) = \chi(\xi/2^j) \hat{f}(\xi)
\end{displaymath}
where $\chi(\xi)$ is a function supported on $1/2 \leq |\xi| \leq 2$ and for all
$\xi \neq 0$,
\begin{displaymath}
 \sum_{j\in\mathbb{Z}} \chi(\xi/2^j) = 1
\end{displaymath}
We say $f$ belongs to the Besov space $B^s_{p,q}(\mathbb{T}^3)$,
$s\in\mathbb{R}, 1 \leq p,q \leq \infty$, if $f$ is a distribution such that
\begin{displaymath}
 ||f||_{B^s_{p,q}} := ||2^{js}||P_jf||_{L^p}||_{l^q_j(\mathbb{Z})} < \infty
\end{displaymath}
An equivalent definition of the Besov spaces on $\mathbb{T}^3$ and for $s$
negative is
\begin{displaymath}
 ||f||_{B^s_{p,q}} := ||t^{-s/2}||e^{t\Delta}f||_{L^p}||_{L^q(\mathbb{R}_+,
\frac{dt}{t})} < \infty
\end{displaymath}
\end{defn}

The proof for the equivalence of definitions and many other useful properties of
Littlewood-Paley decompositions and Besov spaces can be found in \cite{D}.  For
the purpose of this paper, we will only require the following embeddings:

\begin{lem} \label{lem:incl}
 In $\mathbb{T}^3$, we have the inclusion of the following spaces:
\begin{displaymath}
 L^3 \hookrightarrow B^{-1+\frac{3}{p}}_{p,2} (p > 3) \hookrightarrow
B^{-1}_{\infty,\infty}
\end{displaymath}
In $\mathbb{T}^2$, we have the following inclusions:
\begin{displaymath}
 L^2 \hookrightarrow B^{-1}_{\infty,2}
\end{displaymath}
\end{lem}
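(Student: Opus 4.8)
The plan is to prove the two inclusions in Lemma \ref{lem:incl} directly from the (heat-semigroup) definition of the negative-index Besov norms, using only the $L^p$--$L^q$ heat estimates from the lemma in Section 2 and elementary manipulations of the $L^q(\mathbb{R}_+,dt/t)$ quasinorms. Throughout I would work on the torus, where the heat semigroup instantly regularizes and, because we only ever integrate against mean-zero data, one has good decay of $\|e^{t\Delta}f\|_{L^\infty}$ as $t\to\infty$; this handles the large-time tail of every $dt/t$ integral and lets me focus on the small-time behaviour.

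For the first chain $L^3\hookrightarrow B^{-1+3/p}_{p,2}\,(p>3)\hookrightarrow B^{-1}_{\infty,\infty}$, I would first establish $L^3\hookrightarrow B^{-1+3/p}_{p,2}$. Write $s=-1+3/p<0$ for $p>3$, so $-s/2=\tfrac12(1-3/p)>0$. By the heat lemma with no derivatives, $\|e^{t\Delta}f\|_{L^p}\lesssim t^{-\frac32(\frac13-\frac1p)}\|f\|_{L^3}=t^{s/2}\|f\|_{L^3}$, hence $t^{-s/2}\|e^{t\Delta}f\|_{L^p}\lesssim \|f\|_{L^3}$ uniformly; this already controls the small-$t$ part, and the large-$t$ part is controlled using the exponential decay of $\|e^{t\Delta}f\|_{L^p}$ on $\mathbb{T}^3$ for mean-zero $f$ (or, more crudely, $\|e^{t\Delta}f\|_{L^p}\lesssim\|e^{t\Delta}f\|_{L^3}^{3/p}\|e^{t\Delta}f\|_{L^\infty}^{1-3/p}$ with $L^\infty$ decay), so the $L^2(dt/t)$ norm is finite and bounded by $C\|f\|_{L^3}$. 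For the second embedding $B^{-1+3/p}_{p,2}\hookrightarrow B^{-1}_{\infty,\infty}$, I would pass to the Littlewood--Paley characterization: for $f$ with $\widehat{P_jf}$ as above, Bernstein's inequality on $\mathbb{T}^3$ gives $\|P_jf\|_{L^\infty}\lesssim 2^{3j/p}\|P_jf\|_{L^p}$, so $2^{-j}\|P_jf\|_{L^\infty}\lesssim 2^{(-1+3/p)j}\|P_jf\|_{L^p}$, and taking $\sup_j$ on the left is dominated by the $\ell^2_j$ norm on the right, i.e. $\|f\|_{B^{-1}_{\infty,\infty}}\lesssim\|f\|_{B^{-1+3/p}_{p,2}}$. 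The $\mathbb{T}^2$ statement $L^2\hookrightarrow B^{-1}_{\infty,2}$ is analogous: with $s=-1$, the $2$-dimensional heat estimate gives $\|e^{t\Delta}f\|_{L^\infty}\lesssim t^{-1/2}\|f\|_{L^2}$ on $\mathbb{T}^2$, hence $t^{1/2}\|e^{t\Delta}f\|_{L^\infty}\lesssim\|f\|_{L^2}$ for small $t$, while for large $t$ one uses $\|e^{t\Delta}f\|_{L^\infty}\lesssim t^{-1/2}\|f\|_{L^1}\lesssim t^{-1/2}\|f\|_{L^2}$ (finite measure) together with spectral decay for mean-zero $f$; taking the $L^2(\mathbb{R}_+,dt/t)$ norm yields $\|f\|_{B^{-1}_{\infty,2}}\lesssim\|f\|_{L^2}$.

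I expect the only genuinely delicate point to be the treatment of the large-time tail of the $dt/t$ integrals: the raw heat estimates from Section 2 degrade (or become useless) as $t\to\infty$ on a compact domain, so one must exploit either the spectral gap of $-\Delta$ on the torus acting on mean-zero functions (giving $\|e^{t\Delta}f\|_{L^2}\lesssim e^{-ct}\|f\|_{L^2}$, then upgraded to $L^\infty$ by a further instant of smoothing) or simply the finiteness of the torus volume to interpolate down to a fixed small power of $t$. Everything else — the small-time heat bounds, Bernstein on $\mathbb{T}^d$, and the elementary inequality $\|\cdot\|_{\ell^\infty}\le\|\cdot\|_{\ell^2}$ — is routine. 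I would record the mean-zero reduction once at the start (harmless since all data in the paper is mean zero) and then present the two chains as short computations.
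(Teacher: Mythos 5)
There is a genuine gap, and it sits exactly where you declare the argument routine: the \emph{small}-time part of the $L^2(\mathbb{R}_+,\frac{dt}{t})$ integrals, not the large-time tail. A uniform-in-$t$ bound $t^{-s/2}\|e^{t\Delta}f\|_{L^p}\lesssim\|f\|_{L^3}$ controls only the $L^\infty(\frac{dt}{t})$ norm, i.e.\ it proves $L^3\hookrightarrow B^{-1+3/p}_{p,\infty}$; since $\int_0^1\frac{dt}{t}=\infty$, it says nothing about the $L^2(\frac{dt}{t})$ norm near $t=0$, which is where the entire content of the third index $2$ lives. (The large-time tail, which you single out as the delicate point, is indeed disposed of by the spectral gap for mean-zero data and is the easy part.) The same objection applies verbatim to your proof of $L^2(\mathbb{T}^2)\hookrightarrow B^{-1}_{\infty,2}$: the pointwise bound $t^{1/2}\|e^{t\Delta}f\|_{L^\infty}\lesssim\|f\|_{L^2}$ only yields the $q=\infty$ space. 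For that two-dimensional inclusion the missing ingredient is orthogonality: Bernstein gives $2^{-j}\|P_jf\|_{L^\infty}\lesssim\|P_jf\|_{L^2}$ and Plancherel gives $\sum_j\|P_jf\|_{L^2}^2\sim\|f\|_{L^2}^2$, which is precisely the ``straightforward application of Bernstein's inequality'' the paper intends; the heat-semigroup route cannot see this almost-orthogonality, and the $L^2$-specific Plancherel step is the whole point.

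For the first inclusion $L^3\hookrightarrow B^{-1+3/p}_{p,2}$ the problem is worse than a missing step. Bernstein reduces it to $\bigl(\sum_j\|P_jf\|_{L^3}^2\bigr)^{1/2}\lesssim\|f\|_{L^3}$, which is the \emph{reverse} Littlewood--Paley square-function inequality and fails for exponent $3>2$: what is true is $L^3\hookrightarrow B^{0}_{3,\infty}\hookrightarrow B^{-1+3/p}_{p,\infty}$ (or with third index $q\ge 3$), and a standard counterexample --- spatially disjoint, $L^3$-normalized bumps at frequencies $2^j$, $j=1,\dots,J$, with coefficients in $\ell^3$ but not $\ell^2$ --- lives on the torus and makes the ratio of the two norms blow up as $J\to\infty$. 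So no argument along your lines (or any lines) closes this inclusion as literally stated; the honest version either takes $q=\infty$ as the third index or exploits that the forcing $F$ to which the lemma is applied is supported in $O(1)$ dyadic shells, for which the $\ell^\infty_j$ and $\ell^2_j$ norms are comparable. Your second embedding $B^{-1+3/p}_{p,2}\hookrightarrow B^{-1}_{\infty,\infty}$, via Bernstein and $\|\cdot\|_{\ell^\infty}\le\|\cdot\|_{\ell^2}$, is correct and is the one piece that genuinely is routine.
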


The proof of the above lemma is a straightforward application of the Bernstein's
inequality and will be omitted.

\section{Estimates for $F$}

We start by estimating the $L^1_tL^3_x$ norm of $F$.  Given theorem
\ref{thm:CG} and lemma \ref{lem:incl}, this is indeed the correct space to
consider.  The terms in $F$ is of the form $e^{t\Delta}u^i \partial_i
e^{t\Delta}u^j$ where $i \neq j$.  By the triangle inequality, it is enough to
establish a bound on the $L^1_tL^3_x$ norm of $e^{t\Delta}u^i \partial_i
e^{t\Delta}u^j$, $i \neq j$.  We will consider the case where $u^i$ concentrates
around frequency $N$ and $u^j$ concentrates around frequency $N^{1+\epsilon}$
and the case with $u^i$ concentrates around $N^{1+\epsilon}$ and $u^j$
concentrates around $N$.  The remaining cases where at least one of $u^i$ or
$u^j$ is of low frequency can be estimated similarly and shall be omitted. 
Without loss of generality, we shall assume $i=1$ and $j=2$.

\textbf{Case 1} $|\text{supp}(\widehat{u^1})| \subset [N,O(1)N]$,
$|\text{supp}(\widehat{u^2})| \subset [N^{1+\epsilon},O(1)N^{1+\epsilon}]$
\begin{align} \nonumber
 \qquad &\int_0^\infty ||e^{t\Delta}u^1 \partial_1 e^{t\Delta}u^2||_{L^3} dt\\
\nonumber
& \leq \int_0^\infty ||e^{t\Delta}u^1||_{L^3_{x_2}L^3_{x_3}} ||\partial_1
e^{t\Delta}u^2||_{L^3_{x_1}L^\infty_{x_3}} dt\\ \nonumber
& \lesssim \int_0^K ||u^1||_{L^3} t^{-1/2} N^{\frac{1+\epsilon}{3}}
||u^2||_{L^3} dt + \int_K^\infty ||u^1||_{L^3} ||\partial^{4/3} e^{t\Delta}
u^2||_{L^3} dt\\ \nonumber
& \lesssim ||u^1||_{L^3} ||u^2||_{L^3} K^{1/2} N^{\frac{1+\epsilon}{3}} +
\int_K^\infty ||u^1||_{L^3} ||\partial^4 e^{t\Delta} \partial^{-8/3}
u^2||_{L^3} dt \\ \nonumber
& \lesssim ||u^1||_{L^3} ||u^2||_{L^3} K^{1/2} N^{\frac{1+\epsilon}{3}} +
\int_K^\infty ||u^1||_{L^3} ||u^2||_{L^3} t^{-2} N^{\frac{-8(1+\epsilon)}{3}}
dt \\ \nonumber
& \lesssim ||u^1||_{L^3} ||u^2||_{L^3} (K^{1/2} N^{\frac{1+\epsilon}{3}} +
K^{-1} N^{\frac{-8(1+\epsilon)}{3}}) 
\end{align}

Here, we use Holder inequality in the second line, Bernstein's inequality and
fact that the heat flow does not alter the frequency of the data in the
first term of the third line, and Sobolev inequality in the second term of the
third line.  Berstein's inequality is used once again in the second term of the
fifth line.  

The quantity inside the bracket is minimized at the natural scaling
$K = N^{-2(1+\epsilon)}$, substituting in, we have the estimate:

\begin{displaymath}
 \int_0^\infty ||e^{t\Delta}u^1 \partial_1 e^{t\Delta}u^2||_{L^3} dt \lesssim
||u^1||_{L^3} ||u^2||_{L^3} N^{-2/3 - 2\epsilon/3}.
\end{displaymath}

\textbf{Case 2} $|\text{supp}(\widehat{u^1})| \subset
[N^{1+\epsilon},O(1)N^{1+\epsilon}]$, $|\text{supp}(\widehat{u^2})| \subset
[N,O(1)N]$
\begin{align} \nonumber
\qquad &\int_0^\infty ||e^{t\Delta}u^1 \partial_1 e^{t\Delta}u^2||_{L^3} dt\\
\nonumber 
&\leq \int_0^\infty ||e^{t\Delta}u^1||_{L^3_{x_2}L^\infty_{x_3}}
||\partial_1 e^{t\Delta}u^2||_{L^3} dt \\ \nonumber
&\lesssim \int_0^K N^{(1+\epsilon)/3} ||u^1||_{L^3} t^{-1/2} ||u^2||_{L^3} dt +
\int_K^\infty ||\partial^{1/3} e^{t\Delta}u^1||_{L^3} t^{-1/2} ||u^2||_{L^3} dt
\\ \nonumber
& \lesssim  ||u^1||_{L^3}||u^2||_{L^3} N^{(1+\epsilon)/3} K^{1/2} +
\int_K^\infty ||\partial^{3} e^{t\Delta} \partial^{-8/3} u^1||_{L^3} t^{-1/2}
||u^2||_{L^3} dt \\ \nonumber
& \lesssim ||u^1||_{L^3}||u^2||_{L^3} (N^{(1+\epsilon)/3} K^{1/2} +
\int_K^\infty t^{-2} N^{-8(1+\epsilon)/3} dt) \\ \nonumber
& \lesssim ||u^1||_{L^3}||u^2||_{L^3} (N^{(1+\epsilon)/3} K^{1/2} + K^{-1}
N^{-8(1+\epsilon)/3})
\end{align}

From here we proceed as in case 1 to obtain the same conclusion.  As mentioned
above, the other estimates where one of the term is of low frequency can be
obtained using similar methods.

To summarize, we have proven the following:

\begin{prop}
The following estimates hold for the inhomogenous term $F$ for equation
(\ref{R})
\begin{equation} \label{eqn:F1}
 ||F||_{L^1_tL^3_x} \leq C_1||u^i||_{L^3} ||u^j||_{L^3} N^{-2/3 - 2\epsilon/3}
\end{equation}
if one of $u^i$ or $u^j$ has frequency support at scale $N^{1+\epsilon}$. 
Otherwise, we have
\begin{equation} \label{eqn:F2}
 ||F||_{L^1_tL^3_x} \leq C_1 ||u^i||_{L^3} ||u^j||_{L^3} N^{-2/3}
\end{equation}
\end{prop}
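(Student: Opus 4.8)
The plan is to obtain the Proposition by reading the two displayed bounds off the case analysis just completed, after two routine reductions. First I would record that the Leray projector $\textbf{P}$ is a fixed matrix of Riesz-type Fourier multipliers, hence bounded on $L^3(\mathbb{T}^3)$ (on mean-zero fields, which is all we meet, since $F$ is a divergence), so it can be stripped from $Q(v^i,v^j)=\textbf{P}\,\textrm{div}(v^i\otimes v^j+v^j\otimes v^i)$ at the cost of a universal constant. Second, since $v^i$ has only its $i$-th slot nonzero and $\partial_i(e^{t\Delta}u^i)=e^{t\Delta}\partial_i u^i=0$ for all $t$, the tensor divergence collapses: $\textrm{div}(v^i\otimes v^j)=(e^{t\Delta}u^i)\,(\partial_i e^{t\Delta}u^j)\,e_j$, and symmetrically for $v^j\otimes v^i$. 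With $F=-\sum_{i<j}Q(v^i,v^j)$ and the triangle inequality, everything reduces to bounding $\int_0^\infty\|e^{t\Delta}u^i\,\partial_k e^{t\Delta}u^j\|_{L^3_x}\,dt$ for $k\in\{i,j\}$, over the finitely many relevant pairs of frequency scales.

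The single bilinear estimate is the substance, and it is precisely Cases 1 and 2. The mechanism exploits that $e^{t\Delta}u^i$ is independent of $x_i$ and $e^{t\Delta}u^j$ of $x_j$: an anisotropic Hölder inequality puts an $L^3$ norm on each factor in the two variables it truly depends on and keeps an $L^\infty$ norm of one factor in the single shared variable. That $L^\infty$-in-one-variable is traded back for $L^3$ by one-dimensional Bernstein for small $t$ (costing $N^{1/3}$ or $N^{(1+\epsilon)/3}$ according to which factor carries it) and by the one-dimensional Sobolev embedding $W^{s,3}\hookrightarrow L^\infty$, $s>1/3$, for large $t$. One then splits the time integral at $t=K$. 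On $[0,K]$ the only $t$-dependence is the $t^{-1/2}$ coming from one derivative of the heat flow, which integrates to $K^{1/2}$. On $[K,\infty)$ one rewrites the heat-flow derivatives exactly as in Case 1, $\partial^{4/3}e^{t\Delta}=\partial^{4}e^{t\Delta}\partial^{-8/3}$ (respectively $\partial^{1/3}e^{t\Delta}=\partial^{3}e^{t\Delta}\partial^{-8/3}$ in Case 2), applies the preliminary $L^p$–$L^q$ heat estimate together with the surviving $t^{-1/2}$ so that the large-time integrand decays like $t^{-2}$ (its integral over $[K,\infty)$ being $K^{-1}$), and applies Bernstein to extract the gain $N^{-8(1+\epsilon)/3}$ (or $N^{-8/3}$) from $\partial^{-8/3}$. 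Balancing $N^{(1+\epsilon)/3}K^{1/2}$ against $N^{-8(1+\epsilon)/3}K^{-1}$ fixes $K$ at the scaling value $N^{-2(1+\epsilon)}$ and both pieces collapse to $\|u^i\|_{L^3}\|u^j\|_{L^3}N^{-2/3-2\epsilon/3}$; if no factor sits at scale $N^{1+\epsilon}$, the same computation with $N$ in place of $N^{1+\epsilon}$ yields $\|u^i\|_{L^3}\|u^j\|_{L^3}N^{-2/3}$.

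The remaining pairings, in which one factor lies at a lower frequency scale, go through the identical scheme, with the relevant surviving scale (at least $N$, coming from the other factor) playing the role of the high frequency — or, if one prefers, on the large-time piece one may invoke the exponential decay $\|e^{t\Delta}u^k\|_{L^3}\lesssim e^{-cN^2t}\|u^k\|_{L^3}$ of a frequency-$N$ function, which only helps. Summing the finitely many resulting bounds over $i<j$ and absorbing the finite constant into $C_1$ gives (\ref{eqn:F1}) for every bilinear term involving the component localized at scale $N^{1+\epsilon}$ and (\ref{eqn:F2}) — the weaker and hence controlling bound — for the remaining term $Q(v^1,v^3)$. The only delicate point is bookkeeping: one must assign the anisotropic Hölder split so that the factor carrying the spatial derivative and the one carrying the shared variable are placed consistently, and choose the artificial derivative budget — the $8/3$ derivatives peeled off to feed Bernstein, the Sobolev loss of order $1/3$, and the single genuine $\partial_k$ kept on top — so that the large-time integrand lands on exactly $t^{-2}$. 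This is pure exponent tracking; there is no analytic obstacle beyond the heat-kernel lemma already quoted.
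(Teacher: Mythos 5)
Your proposal is correct and follows essentially the same route as the paper: reduce $F$ to the bilinear terms $e^{t\Delta}u^i\,\partial_i e^{t\Delta}u^j$, apply the anisotropic H\"older split exploiting that each factor misses one variable, use one-dimensional Bernstein for small times and the $1/3$-derivative Sobolev trade for large times (rewritten as $\partial^4 e^{t\Delta}\partial^{-8/3}$ to land on $t^{-2}$), and optimize the splitting time at $K=N^{-2(1+\epsilon)}$. The only additions — making the $L^3$-boundedness of $\mathbf{P}$ explicit and the exponential-decay remark for the low-frequency pairings — are harmless elaborations of steps the paper leaves implicit.
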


\section{Global wellposedness for the residual system}
In this section, we would like to prove global wellposedness for the residual
system (\ref{R}), thus completing the proof of the main theorem:

\begin{prop}
 Assuming the conditions for the initial data listed in theorem
(\ref{thm:main}), the system (\ref{R}) has a unique global solution in
$C_b(\mathbb{R}_+; L^3)$
\end{prop}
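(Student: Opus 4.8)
The plan is to treat (\ref{R}) as the incompressible Navier--Stokes system driven from rest by $F$ and perturbed by the linear term $Q(v^\star,\cdot)$, and to invoke the perturbative theorem \ref{thm:CG} of Chemin and Gallagher \cite{CG1}, which reduces global wellposedness of (\ref{R}) in $C_b(\mathbb{R}_+;L^3)$ to two facts: that $v^\star$ be small in the scaling-critical norm governing the linear term, and that $\|F\|_{L^1_tL^3_x}$ lie below the resulting threshold. Everything then comes down to verifying these from the hypotheses of Theorem \ref{thm:main}. I would split $v^\star=v^1+v^2+v^3$ with $v^i=e^{t\Delta}u_0^i$, recalling that each $v^i$ is genuinely two-dimensional ($v^1=v^1(x_2,x_3)$, cyclically) — the structure already used in the estimates for $F$. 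The relevant norm of $v^\star$ is \emph{not} an $L^3$-based critical norm, in which $v^1,v^2$ are in fact large, but the weaker one dictated by the $L^2$ energy identity for (\ref{R}): discarding the vanishing term $\int\textbf{P}(R\cdot\nabla R)\cdot R\,dx$ and integrating by parts in $Q(v^\star,R)$, the contributions of $v^1,v^2$ have the form $\|v^i\|_{L^\infty}\|R\|_{L^2}\|\nabla R\|_{L^2}$ and are absorbed by Young and Gronwall through $\|v^i\|_{L^2_tL^\infty}$, while that of $v^3$ has the form $\|v^3\|_{L^3}\|\nabla R\|_{L^2}^2$ and is absorbed into the dissipation once $\|v^3\|_{L^\infty_tL^3}$ is small; Poincaré's inequality (mean-zero data on the torus) prevents growth in $t$.

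Now to the three quantitative checks. For $v^1,v^2$: since $u_0^1$ is two-dimensionally frequency localized at scale $N$ (resp. $u_0^2$ at $N^{1+\epsilon}$), two-dimensional Bernstein and the heat estimates of Section 2 give $\|e^{t\Delta}u_0^i\|_{L^\infty}\lesssim N\,e^{-ctN^2}\|u_0^i\|_{L^2}$ (resp. with $N^{1+\epsilon}$); squaring and integrating in $t$, the factor $\int_0^\infty e^{-2ctN^2}\,dt\sim N^{-2}$ cancels the $N^2$, leaving $\|v^i\|_{L^2_tL^\infty}\lesssim\|u_0^i\|_{L^2}\le(C^{-1}\delta\log N^{1+\epsilon})^{1/2}$, as small as we wish once $C^{-1}\delta$ is small — whence the Gronwall step costs only a factor $\exp(O(\delta\log N^{1+\epsilon}))=N^{O(\delta(1+\epsilon))}$, a polynomial in $N$ of arbitrarily small exponent. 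This is what the logarithmic weights in Theorem \ref{thm:main} are for. For $v^3$: $\|v^3\|_{L^\infty_tL^3}\le\|u_0^3\|_{L^3}\le C^{-1}N^{1/3-\delta(1+\epsilon)}(\log N^{1+\epsilon})^{-1/2}\to0$ as $N\to\infty$ whenever $\delta(1+\epsilon)>\tfrac13$, which pins down the exponent in the hypothesis on $\|u_0^3\|_{L^3}$. For $F$: feeding in (\ref{eqn:F1})--(\ref{eqn:F2}), each interaction contributes $\lesssim\|u^i\|_{L^3}\|u^j\|_{L^3}$ times a negative power of $N$; converting the $L^2$ hypotheses by two-dimensional Bernstein ($\|u_0^1\|_{L^3}\lesssim N^{1/3}\|u_0^1\|_{L^2}$, $\|u_0^2\|_{L^3}\lesssim N^{(1+\epsilon)/3}\|u_0^2\|_{L^2}$) and using the direct $L^3$ bound for $u_0^3$, one gets, up to constants, $\delta$ or $\delta^{1/2}$ times a negative power of $N$ times $\log N^{1+\epsilon}$ — the worst case being the $v^1$--$v^2$ interaction, $\sim N^{-\epsilon/3}\,\delta\log N^{1+\epsilon}$. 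Choosing $\epsilon$ large enough that $\delta=\delta(\epsilon)$ can be taken both above $\tfrac{1}{3(1+\epsilon)}$ (for the $v^3$ step) and small enough that each of these exponents stays below $-O(\delta(1+\epsilon))$ (for the $F$ step), we get $N^{O(\delta(1+\epsilon))}\|F\|_{L^1_tL^3}\to0$, so the threshold condition of Theorem \ref{thm:CG} is met for all large $N$, and it yields a unique global $R\in C_b(\mathbb{R}_+;L^3)$ — one first closes in $L^\infty_tL^2\cap L^2_t\dot H^1$ and upgrades to $C_bL^3$ by the usual perturbative iteration — so that $u=v^1+v^2+v^3+R$ is the global solution of (\ref{NS}) claimed in Theorem \ref{thm:main}.

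The step I expect to be the main obstacle is the estimate for $v^\star$. The data $u_0^1,u_0^2$ are genuinely \emph{large} in $L^2$, of size $(\log N^{1+\epsilon})^{1/2}$, hence large in every subcritical Besov norm and even in $B^{-1}_{\infty,\infty}$, so no classical small-data theory applies to $v^1,v^2$. What rescues the argument is that the \emph{only} norm of $v^\star$ that enters is $L^2_tL^\infty$, in which the violent heat dispersion of a single frequency shell ($e^{-ctN^2}$) leaves behind only a logarithm, which $\delta$ then absorbs, and that the one non-localized component is forced small in $L^3$ by hypothesis. The delicate point is that the Gronwall loss $N^{O(\delta(1+\epsilon))}$ and the two one-sided constraints on $\delta$ (from the $v^3$ step, and from the $F$ balance, the latter carrying $N^{-\epsilon/3}$) coexist only when $\epsilon$ is taken sufficiently large — precisely the role that $\epsilon$ is said to play in the remark after Theorem \ref{thm:main}.
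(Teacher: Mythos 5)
Your overall strategy is the paper's: reduce to Theorem \ref{thm:CG} and verify the smallness condition (\ref{eqn:cond}) by bounding $\|v^\star\|_{L^2_t L^\infty}$ logarithmically and $\|F\|_{L^1_tL^3}$ by a negative power of $N$. Your bounds for $v^1$, $v^2$ and for $F$ agree with the paper's. But your treatment of $v^3$ contains a genuine error that propagates into an inconsistent set of constraints on $\delta$ and $\epsilon$. You handle the linear term coming from $v^3$ through smallness of $\|v^3\|_{L^\infty_t L^3}$, which forces you to demand $\delta(1+\epsilon)>\tfrac13$; combined with the upper constraint $\delta(1+\epsilon)\lesssim \epsilon/3$ coming from the worst $F$-interaction (you need $N^{\,\delta(1+\epsilon)-\epsilon/3}\,\delta\log N^{1+\epsilon}\le 1$), this can only be satisfied when $\epsilon$ is large. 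Theorem \ref{thm:main}, however, is asserted for every $\epsilon>0$ with some $\delta(\epsilon)>0$, and the paper needs only an upper bound on $\delta$ (it takes $\delta<\epsilon/4$). So as written your argument does not prove the proposition under the stated hypotheses.

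The missing observation is that $v^3$ can be treated exactly like $v^1$ and $v^2$, with no smallness of $\|u_0^3\|_{L^3}$ required at this stage: $u_0^3$ is a mean-zero function of $(x_1,x_2)$ alone, so the two-dimensional embedding $L^2\hookrightarrow B^{-1}_{\infty,2}$ of Lemma \ref{lem:incl}, together with the heat-flow characterization of negative Besov norms, gives $\|e^{t\Delta}u_0^3\|_{L^2_tL^\infty}\lesssim\|u_0^3\|_{B^{-1}_{\infty,2}}\lesssim\|u_0^3\|_{L^2}\le(C^{-1}\delta\log N^{1+\epsilon})^{1/2}$. No frequency localization is needed here, only the criticality of this embedding in two dimensions; your explicit single-shell computation with the factor $e^{-ctN^2}$ is a special case of it and is unavailable for $u_0^3$, which is why you were driven to the $L^3$ route. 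The hypothesis $\|u_0^3\|_{L^3}\le C^{-1}N^{1/3-\delta(1+\epsilon)}(\log N^{1+\epsilon})^{-1/2}$ is used only in the $F$-estimate, where $u_0^3$ is not frequency localized so Bernstein cannot convert its $L^2$ bound into an $L^3$ bound: it makes the products $\|u_0^i\|_{L^3}\|u_0^3\|_{L^3}N^{-2/3}\lesssim(C^{-1}\delta)^{1/2}N^{-\delta(1+\epsilon)}$ fit under the threshold $e^{-C_0\|v^\star\|^2_{L^2_tL^\infty}}\gtrsim N^{-\delta(1+\epsilon)}$. With this correction only the single upper constraint on $\delta$ remains and the argument closes for every $\epsilon>0$. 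A final minor point: Theorem \ref{thm:CG} is quoted from \cite{CG1}, so your energy-estimate sketch of its proof, which is where the error entered, is not needed.
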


The proof of it makes use of the following theorem and proposition proven in
\cite{CG1}:

\begin{thm} \label{thm:CG}
 Let $p \in (3,\infty)$ be given.  There is a constant $C_0 > 0$ such that for
any $R_0$ in $B^{-1+\frac{3}{p}}_{p,2}$, $F$ in $L^1(\mathbb{R}_+;
B^{-1+\frac{3}{p}}_{p,2})$ and $v^\star$ in $L^2(\mathbb{R}_+; L^\infty)$
satisfying
\begin{equation} \label{eqn:cond}
 ||R_0||_{B^{-1+\frac{3}{p}}_{p,2}} + ||F||_{L^1(\mathbb{R}_+;
B^{-1+\frac{3}{p}}_{p,2})} \leq C_0^{-1}e^{-C_0||v^\star||^2_{L^2(\mathbb{R}_+;
L^\infty)}}
\end{equation}
there is a unique global solution $R$ to \emph{(\ref{R})} associated with $R_0$
and $F$, such that 
\begin{displaymath}
 R\in C_b(\mathbb{R}_+; H^{1/2}) \cap L^2(\mathbb{R}_+; H^{3/2})
\end{displaymath}
\end{thm}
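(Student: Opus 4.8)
The plan is to treat the residual system (\ref{R}) as a perturbation of the heat flow, with the crucial caveat that the linear term $Q(v^\star,R)$ need \emph{not} be small: $v^\star$ is controlled only in $L^2(\mathbb{R}_+;L^\infty)$, whose norm may be arbitrarily large, so $Q(v^\star,\cdot)$ is not a small linear operator and a direct contraction mapping must fail. The first task is therefore to neutralize this term by a time-dependent gauge. I would set $\theta(t) = C_0\int_0^t \|v^\star(s)\|_{L^\infty}^2\,ds$, so that $\theta(\infty) = C_0\|v^\star\|^2_{L^2(\mathbb{R}_+;L^\infty)}$, and work with the weighted unknown $\widetilde R = e^{-\theta}R$. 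The equation for $\widetilde R$ then carries an extra definite damping term of size $\theta'(t) = C_0\|v^\star\|^2_{L^\infty}$ which, in the energy identity below, dominates the indefinite contribution of $Q(v^\star,R)$. This is exactly the mechanism by which the factor $e^{-C_0\|v^\star\|^2_{L^2 L^\infty}}$ is forced into the smallness hypothesis (\ref{eqn:cond}).

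With the linear term tamed I would pass to the mild formulation $R = e^{t\Delta}R_0 + \int_0^t e^{(t-s)\Delta}F\,ds - \mathcal{B}(R,R) - \mathcal{L}(R)$, where $\mathcal{B}(R,R) = \int_0^t e^{(t-s)\Delta}\textbf{P}(R\cdot\nabla R)\,ds$ and $\mathcal{L}(R) = \int_0^t e^{(t-s)\Delta}Q(v^\star,R)\,ds$, and run a Banach fixed point in the scaling-critical path space adapted to $B^{-1+\frac{3}{p}}_{p,2}$ (the Cannone--Meyer--Planchon / Kato space in which the free heat evolution of $B^{-1+\frac{3}{p}}_{p,2}$ data is bounded). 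The bilinear operator $\mathcal{B}$ satisfies the standard Navier--Stokes continuity estimate $\|\mathcal{B}(R,R)\|_X \lesssim \|R\|_X^2$; for $\mathcal{L}$ I would use the gauged estimate of the previous step so that, after absorbing the damping, $\mathcal{L}$ contracts with a factor at most $\tfrac12$. The hypothesis (\ref{eqn:cond}) is precisely what places the free term $e^{t\Delta}R_0 + \int_0^t e^{(t-s)\Delta}F\,ds$ inside the resulting contraction ball of radius $\lesssim C_0^{-1}e^{-C_0\|v^\star\|^2_{L^2 L^\infty}}$, giving a unique global solution.

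It remains to upgrade this solution to $C_b(\mathbb{R}_+;H^{1/2}) \cap L^2(\mathbb{R}_+;H^{3/2})$ by an energy argument. Testing (\ref{R}) against $R$ in $\dot H^{1/2}$ yields $\tfrac12\frac{d}{dt}\|R\|^2_{\dot H^{1/2}} + \|R\|^2_{\dot H^{3/2}} = -\langle \textbf{P}(R\cdot\nabla R),R\rangle_{\dot H^{1/2}} - \langle Q(v^\star,R),R\rangle_{\dot H^{1/2}} + \langle F,R\rangle_{\dot H^{1/2}}$. The cubic term is bounded by $\lesssim \|R\|_{\dot H^{1/2}}\|R\|^2_{\dot H^{3/2}}$ and absorbed once $\|R\|_{\dot H^{1/2}}$ is small (from the fixed-point step); the forcing is handled by duality in $\dot H^{1/2}$. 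For the linear term I would write $Q(v^\star,R) = \textbf{P}(v^\star\cdot\nabla R + R\cdot\nabla v^\star)$ and exploit $\mathrm{div}\,v^\star = 0$ to cancel the top-order transport piece $\langle v^\star\cdot\nabla\Lambda^{1/2}R,\Lambda^{1/2}R\rangle$, leaving only commutator and gradient terms estimable by the $L^\infty$ size of $v^\star$ alone, giving $|\langle Q(v^\star,R),R\rangle_{\dot H^{1/2}}| \lesssim \|v^\star\|_{L^\infty}\|R\|_{\dot H^{1/2}}\|R\|_{\dot H^{3/2}}$. By Young this is $\le \tfrac14\|R\|^2_{\dot H^{3/2}} + C\|v^\star\|^2_{L^\infty}\|R\|^2_{\dot H^{1/2}}$, and Gronwall with the weight $e^{\theta(t)}$ closes the estimate, the total growth $e^{\theta(\infty)}$ being exactly compensated by the forcing smallness.

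I expect the main obstacle to be precisely this product/commutator estimate for $\langle Q(v^\star,R),R\rangle_{\dot H^{1/2}}$: since $v^\star$ is controlled only in $L^\infty_x$, with no spatial-derivative information, one cannot naively apply a fractional Leibniz rule to $v^\star\otimes R$, and it is essential to use the divergence-free structure to remove the top-order transport contribution before bounding the remainder in terms of $\|v^\star\|_{L^\infty}$. A secondary technical point is the reconciliation of the weak $B^{-1+\frac{3}{p}}_{p,2}$ smallness with the stronger $H^{1/2}$ conclusion, which relies on parabolic smoothing at positive times and is in any case automatic for the application to (\ref{R}), where $R_0 = 0$.
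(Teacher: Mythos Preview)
The paper does not prove this theorem at all: it is quoted from Chemin and Gallagher \cite{CG1} and used as a black box. The sentence immediately preceding the statement reads ``The proof of it makes use of the following theorem and proposition proven in \cite{CG1}'', and the \texttt{proof} environment that follows the statement is in fact the proof of the preceding Proposition (namely, the verification that the initial data of Theorem~\ref{thm:main} satisfy the smallness condition (\ref{eqn:cond})), not a proof of Theorem~\ref{thm:CG} itself. So there is no ``paper's own proof'' for you to be compared against.

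That said, your sketch is a reasonable outline of how Chemin--Gallagher actually argue in \cite{CG1}: the exponential weight in (\ref{eqn:cond}) does arise from a Gronwall factor absorbing the $Q(v^\star,R)$ term via $\|v^\star(t)\|_{L^\infty}^2$, and the global existence is obtained by a fixed point in a Kato-type space adapted to $B^{-1+3/p}_{p,2}$ together with an $\dot H^{1/2}$ energy estimate. Your identification of the delicate step --- controlling $\langle Q(v^\star,R),R\rangle_{\dot H^{1/2}}$ with only $\|v^\star\|_{L^\infty}$ available, via the divergence-free cancellation of the transport part --- is accurate. If you want to include a self-contained proof in place of the citation, your outline would serve, but for the purposes of this paper the correct move is simply to cite \cite{CG1}.
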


\begin{proof}
It is easy to check that the assumptions on the initial data are enough to
satisfy (\ref{eqn:cond}).  Indeed we have, firstly,
\begin{displaymath}
 ||v^\star||^2_{L^2(\mathbb{R}_+; L^\infty)} \leq \sum_{1\leq i \leq
3}||u_0^i||^2_{B^{-1}_{\infty,2}}
\end{displaymath}
by definition.  From the embedding of $L^2$ into $B^{-1}_{\infty,2}$, we have 
\begin{displaymath}
 ||u_0^i||_{B^{-1}_{\infty,2}} \lesssim ||u_0^i||_{L^2}
\end{displaymath}
which in turn is bounded by $(C_0^{-1} \delta \log
N^{1+\epsilon})^{1/2}$ by our assumptions.  Therefore we have 
\begin{displaymath}
 e^{-C_0||u^\star||^2_{L^2(\mathbb{R}_+; L^\infty)}} \geq
N^{-\delta(1+\epsilon)}
\end{displaymath}

On the other hand, since $L^3(\mathbb{T}^3) \hookrightarrow
B^{-1+\frac{3}{p}}_{p,2}(\mathbb{T}^3)$ for $p > 3$, we have 
\begin{displaymath}
 ||F||_{L^1(\mathbb{R}_+; B^{-1+\frac{3}{p}}_{p,2})} \leq
C_2 ||F||_{L^1(\mathbb{R}_+; L^3)}
\end{displaymath}

From the assumption of the energy of the initial condition, and by Bernstein's
inequality, $||u_0^1||_{L^3} \lesssim N^{\frac{1}{3}}(C^{-1} \delta \log
N^{1+\epsilon})^{1/2}$ and $||u_0^2|| \lesssim N^{\frac{1+\epsilon}{3}}(C^{-1}
\delta \log N^{1+\epsilon})^{1/2}$.  Together with the assumption on the $L^3$
norm of $||u_0^3||$, we have the following:

For $i\neq2$,
\begin{displaymath}
 ||u_0^i||_{L^3}||u_0^2||_{L^3} \lesssim N^{\frac{2+\epsilon}{3}} C^{-1} \delta
\log N^{1+\epsilon}
\end{displaymath}
and
\begin{displaymath}
 ||u_0^1||_{L^3}||u_0^3||_{L^3} \lesssim (C^{-1} \delta)^{1/2} N^{\frac{2}{3} -
\delta(1+\epsilon)}
\end{displaymath}

From the estimates (\ref{eqn:F1}) and (\ref{eqn:F2}), and for sufficiently
small $\delta$, say less than $\epsilon/4$, we can conclude that
\begin{displaymath}
||F||_{L^1(\mathbb{R}_+; L^3)} \leq C^{-1} N^{-\delta(1+\epsilon)}
\end{displaymath}
\end{proof}

Condition (\ref{eqn:cond}) is thus satisfied and the wellposedness of the
residual equation follows.  Global wellposedness of the original problem
(\ref{NS}) is obtained by combining the wellposedness of (\ref{R}) and that of
heat equations.

\section{An illustration of an example}
The largest critical space known for the 3D Navier-Stokes equations is the Besov
space $B^{-1}_{\infty, \infty}$.  Bourgain and Pavlovi\'c \cite{BP} have shown
that the Navier-Stokes system is illposed in the critical space.  We shall show
that the conditions set out for initial data in the main theorem allows
arbitrary large $B^{-1}_{\infty, \infty}$ norm.  Chemin, Gallagher, Paicu and
Zhang (e.g. \cite{CG1}) have also provided different examples of arbitrarily
large initial data in $B^{-1}_{\infty, \infty}$ that the 3D Navier-Stokes system
is globally wellposed under those data.

Let us first recall the theorem we are trying to establish:
\begin{thm}
 For any $M > 0$, there exists initial data $u_0^1, u_0^2, u_0^3$ satisfying
that conditions in Theorem \ref{thm:main} such that each component $u_0^i$ is
large in the following sense:
\begin{displaymath}
 ||u_0^i||_{B^{-1}_{\infty,\infty}} > M
\end{displaymath}
\end{thm}

\begin{rmk}
 Notice that by the nature of our initial condition being a two dimensional
object, and from the embedding that in 2D, $L^2 \hookrightarrow B^{-1}_{\infty,
\infty}$, the data demonstarted below will also have large energy.
\end{rmk}

\begin{proof}
 Consider the function $u_0^1$ with frequency support on ${0} \times
[N,2N]\times[N,2N]$, we can write it in terms of its Fourier expansion
\begin{displaymath}
 u_0^1(x_2,x_3) = \sum_{m_2=N}^{2N} \sum_{m_3=N}^{2N} a_{m_2m_3}
e^{im_2x_2}e^{im_3x_3}
\end{displaymath}
We shall pick the values $a_{m_2m_3}$ such that at time $t = 1/{N^2}$, they are
all equal, i.e.
\begin{displaymath}
 a_{m_2m_3} e^{-\frac{m_2^2+m_3^2}{N^2}} = A
\end{displaymath}
for some $A$.
We shall also require that the $L^2$ norm of $u_0^1$ to be equal to $C^{-1}
\log N$, i.e.
\begin{displaymath}
 \sum_{m_2=N}^{2N} \sum_{m_3=N}^{2N} a_{m_2m_3}^2 = C^{-1} \log N
\end{displaymath}
It is clear that we can find coefficients $a_{m_2m_3}$ that satisfy the above
conditions.

For $u_0^2$, we pick a similar function, but localized in frequency on the
support $[N^{1+\epsilon}, 2N^{1+\epsilon}] \times {0} \times [N^{1+\epsilon},
2N^{1+\epsilon}]$ instead.  More precisely, if we write
\begin{displaymath}
 u_0^2(x_1,x_3) = \sum_{m_1={N^{1+\epsilon}}}^{2{N^{1+\epsilon}}}
\sum_{m_3={N^{1+\epsilon}}}^{2{N^{1+\epsilon}}} b_{m_1m_3}
e^{im_2x_2}e^{im_3x_3},
\end{displaymath}
then the coefficients $b_{m_1m_3}$ satisfies the following:
\begin{displaymath}
 b_{m_1m_3} e^{-\frac{m_1^2+m_3^2}{N^{2(1+\epsilon)}}} = B
\end{displaymath}
for some $B$ and
\begin{displaymath}
 \sum_{m_1=N^{1+\epsilon}}^{2N^{1+\epsilon}}
\sum_{m_3=N^{1+\epsilon}}^{2N^{1+\epsilon}} b_{m_1m_3}^2 = C^{-1}
\log N^{1+\epsilon}
\end{displaymath}

Lastly, we shall take $u_0^3 = (C^{-1} \delta \log N^{1+\epsilon})^{1/2}e^{ix_1}e^{ix_2}$.

By construction, the data $(u_0^1,u_0^2,u_0^3)$ satisfies the conditions of
theorem (\ref{thm:main}), the only task remaining is to compute the
$B^{-1}_{\infty,\infty}$ norm of the data.

\begin{align} \nonumber
 ||e^{\frac{\Delta}{N^2}} u_0^1||_{L^\infty}  &\geq \sum_{m_2=N}^{2N}
\sum_{m_3=N}^{2N} a_{m_2m_3} e^{-\frac{m_2^2+m_3^2}{N^2}} \\ \nonumber
&= N(\sum_{m_2=N}^{2N} \sum_{m_3=N}^{2N} a^2_{m_2m_3}
e^{-\frac{2(m_2^2+m_3^2)}{N^2}})^{1/2} \\ \nonumber
&\gtrsim N (\sum_{m_2=N}^{2N} \sum_{m_3=N}^{2N} a^2_{m_2m_3})^{1/2} \\
\nonumber
&= N ||u_0^1||_{L^2} \\ \nonumber
&= C^{-1/2} N (\log N)^{1/2}
\end{align}

On the first equality above, we use the condition that at $t = N^{-2}$, the summands are all equal.

By definition,
\begin{align} \nonumber
 ||u_0^1||_{B^{-1}_{\infty,\infty}} &= \sup_{t>0} t^{1/2} ||e^{t\Delta}
u_0^i||_{L^\infty} \\ \nonumber
&\geq N^{-1} ||e^{\frac{\Delta}{N^2}} u_0^1||_{L^\infty} \\ \nonumber
&\geq C^{-1/2}(\log N)^{1/2}
\end{align}

Similarly, we can show that $||u_0^2||_{B^{-1}_{\infty,\infty}} \geq C^{-1/2}
(\log N^{1+\epsilon})^{1/2}$

Lastly, we can compute $||u_0^3||_{B^{-1}_{\infty,\infty}} = (C^{-1} \delta
\log N^{1+\epsilon})^{1/2}$.  By taking $N$ to be large, we can guarantee that
the $B^{-1}_{\infty, \infty}$ norm of all three components to be larger than
$M$ for any $M$

\end{proof}

\section{Concluding Remarks}
By considering initial data with conditions in the frequency support, we can reduce and control the interaction between different components coming from the nonlinear term of the equation, and allows a perturbative approach as the equations are now closed to a decoupled set of heat equations.  It will be interesting to consider a relaxation of such conditions, for example, when the initial data is the sum of two 2D data, say one in the $x-y$ direction and the other in the $y-z$ direction, with some amount of frequency separation.  The reduction in this case would be to the established results of global wellposedness of 2D Navier-Stokes instead of the heat equations.  Further considerations such as above will be addressed in a future article.

\section{Acknowledgements}
The author would like to thank Prof. Sergiu Klainerman for suggesting the
problem and Prof. Sergiu Klainerman and Prof. Igor Rodnianski for providing
useful disussions and insights, without which the paper would not be possible. 
The author would also like to thank Jonathan W. Luk for proofreading early
versions of this paper and his useful suggestions of various modifications and
additions.

\end{document}